\providecommand{\U}[1]{\protect\rule{.1in}{.1in}}
\newtheorem{theorem}{Theorem}
\newtheorem{lemma}[theorem]{Lemma}
\newtheorem{proposition}[theorem]{Proposition}
\newtheorem{remark}[theorem]{Remark}
\newenvironment{proof}[1][Proof]{\noindent\textbf{#1.} }{\ \rule{0.5em}{0.5em}}
\begin{document}

\title{\textbf{A Prior Derivation and Local Existence of Classical Solutions for the Relativistic Euler Equations with Logarithmic Equation of State}}

\author{\textsc{Ka Luen Cheung},  \textsc{Sen Wong\thanks{Corresponding Author, E-mail address: senwongsenwong@yahoo.com.hk}} \ and  \textsc{Tat Leung Yee}\\\textit{Department of Mathematics and Information Technology,}\\\textit{The Education University of Hong Kong,}\\\textit{10 Lo Ping Road, Tai Po, New Territories, Hong Kong}}
\date{v25 Revised 12-September-2021}
\maketitle
\begin{abstract}
In this paper, from an investigation of a symmetric hyperbolic system, a prior derivation of the logarithm equation of state is provided. Through a diffeomorphism transforming the classical Euler equations to the symmetric hyperbolic system, we show that the logarithm pressure co-exists with existing barotropic equations of state without applying any physical laws. In this connection, we also establish a local existence of classical solutions for the relativistic Euler equations with the logarithm equation of state.

\

MSC: 35B44, 35L67, 35Q31, 35B30

\

Key Words: Subluminal Condition; Relativistic Euler Equations; Logarithmic Equation of State; Local Existence; Symmetric Hyperbolic System
\end{abstract}
\newpage
\section{Introduction}
Originated from the local conservation of stress-energy for a perfect fluid in the $4$-dimensional Minkowski spacetime with metric signature $(-,+,+,+)$, when one fixes a space-time coordinates as $(t,x)=(t,x_1,x_2,x_3)$, the $3+1$-dimensional relativistic Euler equations (equation $(1.3)$ in $\cite{v1_r1}$) is expressed as
\begin{align}
    \left\{\begin{matrix}
    \partial_t\left(\displaystyle\frac{\rho c^2+p}{c^2-|v|^2}-\frac{p}{c^2}\right)+\nabla\cdot\left(\displaystyle\frac{\rho c^2+p}{c^2-|v|^2} v\right)=0,\\
    \partial_t\left(\displaystyle\frac{\rho c^2+p}{c^2-|v|^2}v\right)+\nabla\cdot\left(\displaystyle\frac{\rho c^2+p}{c^2-|v|^2}v\otimes v\right)+\nabla p=0,
    \end{matrix}\right.\label{v5_r1}
\end{align}
where $\rho\geq0$, $v$ and $p$ are the mass-energy density, transformed velocity and pressure respectively. $|v|$ is always less than the light speed $c$ so that $c^2-|v|^2$ is always positive.

The pressure $p$ in $(\ref{v5_r1})_2$ is determined by the state equation. In this circumstances, the state equation for logarithmic pressure, namely,
\begin{align}
p=A\ln\rho,\ A>0    \label{v18_e5}
\end{align}
is adopted here.

The logarithmic pressure $(\ref{v18_e5})$ has actually been introduced such as in \cite{v21_rr1, v21_rr2, v21_rr3} for the dark energy fluid to investigate the dynamical evolution of a late-time universe. When one compares $(\ref{v18_e5})$ with the polytropic equation of state, namely, $p=K_1 \rho^\gamma,\ \gamma>1,\ K_1>0$ and the equation of state for generalized Chaplygin gases, namely, $p=-K_2 \rho^{-\gamma},\ 0<\gamma\leq1,\ K_2>0$, it can be seen remarkably that the pressure $p(\rho)=A\ln\rho$ is positive when $\rho>1$ or negative when $0 <\rho< 1$. Thus, it can be regarded as a transition between the polytropic and generalized Chaplygin gases. As a consequence, it has been introduced as a natural and robust candidate for a new unification of dark matter and dark energy. For more mathematical properties of $(\ref{v18_e5})$, readers can refer to the last paragraph of section \ref{v48_s2}.

Mathematical results for both the classical or relativistic systems regarding the logarithmic equation of state is rare and mainly focus on properties of weak solutions of the Riemann problem. (see, for example, \cite{v25_rrr1} and the references therein). In particular, the local well-posedness of strong or classical solutions for system $(\ref{v5_r1})$ with logarithmic pressure has not been established. Thus, we aim to provide a steppingstone mathematical result on classical solutions of the relativistic Euler equation with the logarithmic pressure. More importantly, we argue the rationality for the logarithmic equation of state to exist by providing a prior derivation which shows that the logarithmic equation of state co-exists with the equations of state for the polytropic and generalized Chaplygin gases.

To close this section, we mention two recent finite-time blowup results for system $(\ref{v5_r1})$ with polytropic or generalized Chaplygin gases equation of state, among the other results in \\$\cite{v26_r3, v26_r4, v20_r3, v26_r6, v27_rr1, v26_r1, v26_r2, v20_r5, v5_r1,v27_rr2, v28.7_r1, v28.9_r1}$

When the equation of state is given by $p=p(\rho)$ such that i) $p(0)=0$, ii) $p(\rho)\geq0$ and iii) ${p}''(\rho)\geq0$ for $\rho\in(\rho_*,\rho^*)
$, where $0\leq\rho_*<\rho^*\leq\infty$, the authors in $\cite{v1_r1}$ proved a finite-time blowup result for the relativistic Euler equations, namely, system $(\ref{v5_r1})$, in the case of infinite energy when subluminal condition is adopted. More precisely, with the following subluminal condition,
\begin{align}
    0<{p}'(\rho)\leq c^2,\label{v20_e9}
\end{align}
where $c$ is the speed of light, the authors in $\cite{v1_r1}$ showed, in the spirit of Sideris $\cite{SI}$, that finite-time singularity for smooth solutions of system $(\ref{v5_r1})$ with initial data $(\ref{v11_e5})$ will be developed for small background energy-density $\bar{\rho}$ if the generalized mass is positive and the radial component of generalized momentum is large enough.

For the equation of state with negative pressure given by $p=-\frac{1}{\gamma}\rho^{-\gamma},\gamma\in(0,1]$, the authors in $\cite{r1_v18}$ considered the Cauchy problem of the $3+1$-dimensional relativistic Euler equations for generalized Chaplygin gas with non-vacuum initial data. It was shown that for large background energy-mass density and small pressure coefficient, the smooth solutions of the relativistic Euler equations for generalized Chaplygin gas with the generalized subluminal condition will blow up on finite time when the initial radial component of the generalized momentum is sufficiently large. Moreover, the blowup condition is independent of the signs of the generalized mass.
\section{A Prior Derivation of the Logarithmic Pressure}
In this section, a prior derivation of the logarithmic pressure is presented. 

Consider the following set of equations and ignore their physical meanings in the present context.
\begin{align}
    \left\{
\begin{array}
[c]{rl}%
\rho_t +\nabla\cdot(\rho u) & = 0,\\
\rho\lbrack u_{t}+(u\cdot\nabla)u] +\nabla p&= 0,
\end{array}
\right.\label{v45_e1}
\end{align}
where $\rho=\rho(t,x):[0,T)\times \mathbb{R}^N\to[0,\infty)$, $u=u(t,x):[0,T)\times\mathbb{R}^N\to\mathbb{R}^N$, $p=p(\rho)\in\mathbb{R}$ are the unknowns. Here $T\geq0$ is the life span of $C^1$ solutions of system $(\ref{v45_e1})$ and the fact that $T$ can be positive was established in $\cite{v35_r1}$.

Our assumptions imposed on $p(\rho)$ will be the following.
\begin{align}
    &p(\rho)\text{ is }C^2\text{ as a function of }\rho>0,\label{e6_v16}\\
    &{p}'(\rho)>0\text{ for all }\rho>0 \text{ and}\\
    &p(\rho)\text{ is either strictly convex or strictly concave}\text{ for all }\rho>0.\label{e8_v15}
\end{align}
Then, compare system $(\ref{v45_e1})$ with the following symmetric hyperbolic system:
\begin{align}
    \left\{
\begin{array}
[c]{rl}%
v_t+B\nabla\cdot u&=-u\cdot \nabla v-\displaystyle\frac{A}{2}v\nabla\cdot u,\\
u_t+B\nabla v&=-u\cdot\nabla u-\displaystyle\frac{A}{2}v\nabla v
\end{array}
\right.\label{v45_e2}
\end{align}
for some constants $A\neq0$ and $B\in\mathbb{R}$. Here, $u=u(t,x):[0,T_1)\times\mathbb{R}^N\to\mathbb{R}^N$ and $v=v(t,x):[0,T_1)\times\mathbb{R}^N\to\mathbb{R}$ with life span $T_1\geq0$ are the unknown $C^1$ functions. As a key result, the following proposition is reached.
\begin{proposition} Consider system $(\ref{v45_e1})$ with assumptions $(\ref{e6_v16})$ to $(\ref{e8_v15})$.

Then, the following map $v$ is a $C^1$ diffeomorphism from $\{\rho\ \in\mathbb{R} |\ \rho>0\}$ onto its image.
\begin{align}
    v:\rho\mapsto\frac{2}{A}\left(\sqrt{{p}'(\rho)}-B\right)\label{v51_e3},
\end{align}
where $B\in\mathbb{R}$ and $A\neq0$ are arbitrary constants; ${p}'(\rho)$ is the derivative of $p$ with respect to $\rho$. Moreover, system $(\ref{v45_e1})$ can be transformed through $v$ into (and hence is equivalent to) system $(\ref{v45_e2})$ with $T=T_1$ if and only if
\begin{align}
    p(\rho)=\left\{
    \begin{matrix}
    \displaystyle\frac{K_1}{A+1}\rho^{A+1}+K,\text{ for }A\neq1,\\
    K_1\ln\rho+K,\text{ for }A=1,
    \end{matrix}
    \right.\label{v45_e4}
\end{align}
where $K_1>0$ and $K$ are arbitrary constants.
\end{proposition}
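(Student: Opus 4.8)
The plan is to treat the two assertions separately. The diffeomorphism claim follows immediately from the inverse function theorem; the characterisation of $p$ is obtained by literally carrying out the substitution $\rho\mapsto v$ in $(\ref{v45_e1})$ and matching the result, term by term, against $(\ref{v45_e2})$.

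For the first assertion, set $\phi(\rho):=\frac{2}{A}\bigl(\sqrt{p'(\rho)}-B\bigr)$, so that $v=\phi(\rho)$. By $(\ref{e6_v16})$ and $p'>0$, the map $\sqrt{p'(\cdot)}$ is $C^1$ on $(0,\infty)$, hence so is $\phi$, with $\phi'(\rho)=p''(\rho)/\bigl(A\sqrt{p'(\rho)}\bigr)$. Assumption $(\ref{e8_v15})$ forces $p''$ to keep a constant sign, in particular to be nowhere zero, on $(0,\infty)$, so $\phi'\neq0$ throughout; thus $\phi$ is strictly monotone, hence injective, and the inverse function theorem supplies a $C^1$ inverse on the open interval $\phi\bigl((0,\infty)\bigr)$. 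Therefore $\phi$ is a $C^1$ diffeomorphism onto its image.

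For the second assertion, I would substitute $\rho=\phi^{-1}(v)$ into $(\ref{v45_e1})$. Writing the mass equation as $\rho_t+u\cdot\nabla\rho+\rho\,\nabla\cdot u=0$ and using $\rho_t=\rho'(v)\,v_t$, $\nabla\rho=\rho'(v)\,\nabla v$ with $\rho'(v)=1/\phi'(\rho)\neq0$, division by $\rho'(v)$ turns it into
\[
v_t+u\cdot\nabla v+\rho\,\phi'(\rho)\,\nabla\cdot u=0,
\]
and the momentum equation, after division by $\rho$ and the substitution $\nabla p=p'(\rho)\,\nabla\rho$, into
\[
u_t+(u\cdot\nabla)u+\frac{p'(\rho)}{\rho\,\phi'(\rho)}\,\nabla v=0.
\]
Since, by the definition of $\phi$, one has $B+\frac{A}{2}v=\sqrt{p'(\rho)}$, system $(\ref{v45_e2})$ itself reads $v_t+u\cdot\nabla v+\sqrt{p'(\rho)}\,\nabla\cdot u=0$ and $u_t+(u\cdot\nabla)u+\sqrt{p'(\rho)}\,\nabla v=0$. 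Hence the transported system equals $(\ref{v45_e2})$ precisely when the single coefficient identity
\[
\rho\,\phi'(\rho)=\sqrt{p'(\rho)}\ \Longleftrightarrow\ \rho\,p''(\rho)=A\,p'(\rho),\qquad\rho>0,
\]
holds, the momentum equation contributing $p'(\rho)/\bigl(\rho\,\phi'(\rho)\bigr)=\sqrt{p'(\rho)}$, which is equivalent to it.

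It then remains to integrate this equation. Writing it as $(p')'/p'=A/\rho$ and integrating gives $p'(\rho)=K_1\rho^{A}$ with $K_1>0$ forced by $p'>0$, and one more integration produces exactly $(\ref{v45_e4})$. Conversely, any $p$ of the form $(\ref{v45_e4})$ satisfies $\rho p''=Ap'$ and is smooth on $(0,\infty)$ with $p'>0$ and $p''$ of constant sign and nowhere zero, so $(\ref{e6_v16})$ to $(\ref{e8_v15})$ hold; reversing the computation above shows that $v=\phi(\rho)$ carries $(\ref{v45_e1})$ into $(\ref{v45_e2})$. Finally, because $\phi,\phi^{-1}$ are $C^1$, a $C^1$ solution $(\rho,u)$ of $(\ref{v45_e1})$ with $\rho>0$ on $[0,T)$ corresponds bijectively to the $C^1$ solution $(\phi(\rho),u)$ of $(\ref{v45_e2})$ on the same time interval, whence $T=T_1$. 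I expect the one genuinely delicate point to be confirming that the mass and momentum equations impose one and the same condition on $p$, so that the transformed system really is $(\ref{v45_e2})$ rather than an overdetermined one; the remaining steps — the monotonicity bookkeeping and the ODE integration — are routine.
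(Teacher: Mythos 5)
Your proposal is correct and follows essentially the same route as the paper: strict monotonicity of $v$ from the fixed sign of $p''$ gives the $C^1$ diffeomorphism, and matching coefficients after the change of variables reduces the equivalence of the two systems (for both the mass and the momentum equation) to the single ODE $\rho\,p''(\rho)=A\,p'(\rho)$, which is then integrated; that you substitute into $(\ref{v45_e1})$ rather than into $(\ref{v45_e2})$ is immaterial, and your explicit integration plus converse check is slightly more detailed than the paper's one-line assertion that the ODE's solutions are $(\ref{v45_e4})$. One caveat: integrating $p'(\rho)=K_1\rho^{A}$ yields the logarithm precisely in the exceptional case $A=-1$ (and the power law for $A\neq-1$), so your claim that this ``produces exactly $(\ref{v45_e4})$'' glosses over a discrepancy with the statement as printed, whose case split ``$A\neq1$ / $A=1$'' is evidently a typo for ``$A\neq-1$ / $A=-1$''.
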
\label{v45_p1}
\begin{proof}
First, we prove that $v$ is a diffeomorphism.

Note that $v$ is differentiable with respect to $\rho$ and 
\begin{align}
    {v}'(\rho)=\frac{1}{A\sqrt{{p}'(\rho)}}{p}''(\rho).
\end{align}
From assumption $(\ref{e8_v15})$, we see that ${v}'(\rho)$ is either strictly positive or strictly negative for all $\rho$. Thus, $v$ is either strictly increasing or strictly decreasing. This proves $v$ is $C^1$ differentiable and injective. As the graph of $v^{-1}$ can be obtained by reflecting the graph of $v$ versus $\rho$ along the line $v=\rho$. One sees that $v$ is differentiable implies that $v^{-1}$ is also differentiable provided that ${v}'$ is never zero. After noting that ${(v^{-1})}'$ is equal to $1/{v}'$ by chain rule, one can conclude that $v$ is a $C^1$ diffeomorphism from $\{\rho\in\mathbb{R}\ |\ \rho>0\}$ onto its image. 

As ${p}'(\rho)\neq0$, one has, from $(\ref{v51_e3})$, that
\begin{align}
    v_t&=\frac{{p}''(\rho)}{A\sqrt{{p}'(\rho)}}\rho_t,\label{v57_e5}\\
    \nabla v&=\frac{{p}''(\rho)}{A\sqrt{{p}'(\rho)}}\nabla \rho,\\
    \sqrt{{p}'(\rho)}&=B+\frac{A}{2}v.\label{v57_e7}
\end{align}
Substituting $(\ref{v57_e5})$-$(\ref{v57_e7})$ into $(\ref{v45_e2})_1$, one sees that $(\ref{v45_e2})_1$ is equivalent to
\begin{align}
    \rho_t+\frac{A{p}'(\rho)}{{p}''(\rho)}\nabla\cdot u+u\cdot\nabla \rho=0.
\end{align}
Hence, $(\ref{v45_e2})_1$ is equivalent to $(\ref{v45_e1})_1$ if and only if
\begin{align}
    \frac{A{p}'(\rho)}{{p}''(\rho)}=\rho.\label{v57_e9}
\end{align}
Similarly, $(\ref{v45_e2})_2$ is equivalent to $(\ref{v45_e1})_2$ if and only if $(\ref{v57_e9})$ holds. As the full solutions of the differential equation $(\ref{v57_e9})$ is given by $(\ref{v45_e4})$ and the above steps are reversible, it follows that $T=T_1$ and the proposition is established.
\end{proof}

In the coming section, the significance of proposition $\ref{v45_p1}$ will be given. 
\section{Interpretation of Proposition $\ref{v45_p1}$}\label{v48_s2}
Note that the solutions in $(\ref{v45_e4})$ include the following two special cases.
\begin{align}
\text{Case 1: }A>0 \text{ and }A\neq1,\text{ then } p(\rho)=\frac{1}{\gamma}\rho^\gamma,\label{v46_e5}
\end{align}
where $\gamma:=A+1>1$
and
\begin{align}
\text{Case 2: }-2\leq A<-1,\text{ then } p(\rho)=-\frac{1}{\gamma}\rho^{-\gamma},\label{v46_e6}
\end{align}
where $\gamma:=-A-1\in(0,1]$.

Recall that system $(\ref{v45_e1})$ is the original $N$-dimensional compressible isentropic Euler equations in fluid mechanics, where $\rho$ and $u$ are the density and the velocity of the fluid respectively. $p=p(\rho)$ is the pressure function determined by a barotropic equation of state. (The term barotropic means that the pressure can be expressed as a function of the density.)

The first equation in $(\ref{v45_e1})$ is derived from the mass conservation law while the second equation in $(\ref{v45_e1})$ is a result of the momentum conservation law. 

For Euler equations in their primitive form, $p$ is given by the gamma law:
\begin{align}
    p=\frac{1}{\gamma}\rho^\gamma, \quad \gamma\geq1.\label{v46_e7}
\end{align}
This classical Euler equations are one of the most fundamental equations in fluid dynamics. Many interesting fluid dynamic phenomena can be described through system $(\ref{v45_e1})$ whose pressure is determined by the gamma law $\cite{v25_r1,v25_r2}$. Moreover, system $(\ref{v45_e1})$ with pressure $(\ref{v46_e7})$ is also the special case of the noted Navier-Stokes equations, whose problem of whether there is a formation of singularity is still open and long-standing.

On the other hand, when the state equation is given by
\begin{align}
    p=-\frac{1}{\gamma}\rho^{-\gamma},\quad 0<\gamma\leq1,\label{v46_e8}
\end{align}
system $(\ref{v45_e1})$ is called the Euler equations for generalized Chaplygin Gas (GCG).

Since the theory of dark matter and dark energy were developed in the literature such as $\cite{v52_r1, v52_r2, v52_r3}$ for explaining that the mass of ``visible" matter only comprise $4\%$ of the total energy density in the four-dimensional standard cosmology model, the GCG model has been introduced in $\cite{v52_r5}$ and developed in $\cite{v52_r4}$ as a unification of dark matter and dark energy, where the invisible energy component is regarded as a unified dark fluid. i.e. a mixture of dark matter and dark energy.

The key is that one observes that the derivations of $(\ref{v46_e5})$ $\&$ $(\ref{v46_e6})$ and $(\ref{v46_e7})$ $\&$ $(\ref{v46_e8})$ are independent. Hence, the state equations $(\ref{v46_e7})$ with $\gamma>1$ and $(\ref{v46_e8})$ originated from fluid mechanic and cosmology are \textbf{rediscovered} from the equivalent transformation $(\ref{v45_e2})$ without predetermined physical knowledge. Moreover, in addition to $(\ref{v46_e5})$ and $(\ref{v46_e6})$, there is an extra outcome in $(\ref{v45_e4})$, namely, the logarithmic pressure:
\begin{align}
    p=K_1\ln\rho+K.\label{v46_e9}
\end{align}

Without loss of generality, one may set $K=0$. Below, we add some observable mathematical proprieties of the logarithmic pressure $(\ref{v46_e9})$.

First, it takes both positive and negative values while $(\ref{v46_e7})$ and $(\ref{v46_e8})$ take only positive and negative values respectively. Second, as $\ln \rho$ is undefined when $\rho=0$, one has to consider non-vacuum initial data for the system. (The same is true for $(\ref{v46_e8})$). Third, the two ``tails" of $(\ref{v46_e9})$ coincide with $(\ref{v46_e7})$ and $(\ref{v46_e8})$ in the sense that $\lim_{\rho\to\infty}\ln\rho=\lim_{\rho\to\infty}\rho^{\gamma}$ for $\gamma\geq1$ and $\lim_{\rho\to 0^+}\ln\rho=\lim_{\rho\to0^+}(-\rho^{-\gamma})$ for $0<\gamma\leq1$. Fourth, $(\ref{v46_e9})$ satisfies the requirement that ${p}'(\rho)=\frac{K_1}{\rho}>0$ which is the square of sound speed and is required to be positive by a fundamental thermodynamic assumption.

\begin{remark}
In $\cite{v47_r1}$, the authors established a shock formation result for the two dimensional (i.e. $N=2$) system $(\ref{v45_e1})$ with state equation $(\ref{v46_e7})$. Within the arguments, a change of variable of $\rho$, namely, $\ln\rho$ was introduced. We remark that $(\ref{v46_e9})$ is not a change of variable but an introduction of possible state equation with potential applications and physical interpretation for the general $N$-dimensional system $(\ref{v45_e1})$. 
\end{remark}

\section{Further Assumptions on $p$}
In order to establish the local existence result, we impose two assumptions on the logarithmic equation of state, $p=A\ln\rho$. For the rationality of assuming the common necessary convention, namely, the subluminal condition, readers can refer to section $2$ in \cite{r1_v18}.
\begin{align}
    &\text{Assumption \#1: the subluminial condition: }{p}'(\rho)\leq c^2,\label{e26_v7}\\
    &\text{Assumption \#2: }A\text{ is large enough such than }A> c^2/e,\label{e28_v7}\\
\end{align}
where $e$ is the Euler constant for natural logarithm.

We first present the implications of the above assumptions.
\begin{lemma}\label{v25_l3}
For $p=A\ln\rho$, an implication of Assumption \#1 is that $\rho$ has a uniform positive lower bound:
\begin{align}
&\rho\geq\rho_*:=\frac{A}{c^2}>0\label{e27_v7}    
\end{align}
and
an implication of Assumption \#1 together with Assumption \#2 is that the ratio of $p$ to $\rho$ has a uniform negative lower bound:
\begin{align}
    \frac{p}{\rho}>-c^2.\label{27_v16}
\end{align}
\end{lemma}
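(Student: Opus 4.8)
The plan is to handle the two claims separately, since once the equation of state $p=A\ln\rho$ is substituted each becomes a short computation; the only ingredient beyond algebra is recognizing which of Assumption \#1 and Assumption \#2 drives which conclusion.

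For the first claim $(\ref{e27_v7})$ I would simply differentiate. From $p=A\ln\rho$ one gets $p'(\rho)=A/\rho$, which is positive for $\rho>0$ because $A>0$. Assumption \#1, $p'(\rho)\le c^2$, then reads $A/\rho\le c^2$; multiplying by $\rho/c^2>0$ and rearranging gives $\rho\ge A/c^2=:\rho_*$, and $\rho_*>0$ again because $A>0$. That is exactly $(\ref{e27_v7})$, and note in passing that this uniform lower bound is the only consequence of Assumption \#1 that the rest of the argument will use.

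For the second claim $(\ref{27_v16})$ the idea is to convert the target inequality into a statement about a monotone function. Since $\rho\ge\rho_*>0$, the inequality $p/\rho>-c^2$ is equivalent, after multiplying through by $\rho$, to $A\ln\rho+c^2\rho>0$. So I would introduce $h(\rho):=A\ln\rho+c^2\rho$ on $(0,\infty)$ and observe that $h'(\rho)=A/\rho+c^2>0$ for all $\rho>0$, so $h$ is strictly increasing; combined with $\rho\ge\rho_*$ this gives $h(\rho)\ge h(\rho_*)$ for every admissible $\rho$, reducing the whole claim to showing $h(\rho_*)>0$. This is where Assumption \#2 enters: evaluating, $h(\rho_*)=A\ln(A/c^2)+c^2(A/c^2)=A\bigl(1+\ln(A/c^2)\bigr)$, and $A>c^2/e$ means $A/c^2>1/e$, hence $\ln(A/c^2)>-1$, hence $1+\ln(A/c^2)>0$; since $A>0$ this yields $h(\rho_*)>0$, so $A\ln\rho+c^2\rho=h(\rho)\ge h(\rho_*)>0$, i.e. $(\ref{27_v16})$.

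I do not expect a genuine obstacle here: the only thing one must "see" is the reformulation of $(\ref{27_v16})$ through the monotone auxiliary function $h$, after which the lower bound $\rho_*$ from Assumption \#1 and the size condition $A>c^2/e$ from Assumption \#2 slot in precisely. An alternative bookkeeping is to analyze $f(\rho)=\ln\rho/\rho$ directly — it is increasing on $(0,e)$ with maximum $1/e$ at $\rho=e$, and $\rho_*=A/c^2>1/e$ forces $f(\rho)\ge f(\rho_*)=c^{-2}\ln(A/c^2)>-c^{-2}$ — but using $h$ keeps the strictness of the inequalities transparent, which matters only because $A>c^2/e$ is a strict hypothesis.
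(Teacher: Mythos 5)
Your proof is correct and follows essentially the same route as the paper: the first claim is the same one-line rearrangement of $A/\rho\le c^2$, and the second reduces, exactly as in the paper, to showing $\rho c^2+A\ln\rho>0$ by evaluating at the worst case $\rho=\rho_*$ and invoking $A>c^2/e$. Your auxiliary function $h$ merely makes explicit the monotonicity that the paper uses implicitly when it bounds the two increasing terms by their values at $\rho_*$.
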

\begin{proof}
By definition, we have
\begin{align}
{p}'(\rho)=A/\rho&\leq c^2,
\end{align}
which implies $(\ref{e27_v7})$.

Next, to prove $(\ref{27_v16})$, it suffices to show $\rho c^2+p>0$:
\begin{align}
    \rho c^2+p&=\rho c^2+A\ln\rho\\
    &\geq\frac{A}{c^2}c^2+A\ln\frac{A}{c^2}\\
    &=A+A\ln\frac{A}{c^2}\\
    &> A+A\ln \frac{1}{e}\\
    &=0.
\end{align}
Thus, $(\ref{27_v16})$ is also established.
\end{proof}
\section{Local Existence of Classical Solutions}

Consider the Cauchy problem of system $(\ref{v5_r1})$ with the following initial data.
\begin{align}
    \rho(x,0)=\rho_0(x)>0,\ v(x,0)=v_0(x),\label{v11_e5}
\end{align}
The precise statement of the local existence result is presented as follows.
\begin{theorem}\label{t10_v7}
Under the assumptions $\#1$ and $\#2$ and given initial data $(\rho_0,v_0)$ belonging to the uniformly local Sobolev space $H_{ul}^s=H_{ul}^s(\mathbb{R}^3)$ with $s\geq3$, and that there exists a sufficiently small positive constant $\delta$ such that
\begin{align}
    \rho_*+\delta\leq\rho,\text{ and } |v_0|^2\leq(1-\delta)c^2
\end{align}
hold for all $x\in\mathbb{R}^3$,
then the Cauchy problem of system $(\ref{v5_r1})$ with pressure $(\ref{v18_e5})$ and initial data $(\ref{v11_e5})$ has a unique solution
\begin{align}
    (\rho,v)\in L^\infty([0,T];H_{ul}^s)\cap C([0,T];H_{loc}^s)\cap C^1([0,T];H_{loc}^{s-1})
\end{align}
for some $T=T(\delta,\rho_0,v_0)>0$. Here, $|*|$ is the usual Euclidean norm.

\end{theorem}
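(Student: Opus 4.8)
The plan is to recast the Cauchy problem for $(\ref{v5_r1})$ with $p=A\ln\rho$ as the Cauchy problem for a quasilinear \emph{symmetric} hyperbolic system and then invoke the classical local well-posedness theory for such systems (in the spirit of Kato and Majda) in the uniformly local Sobolev scale $H^s_{ul}$.

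\textbf{Reformulation and structural conditions.} Following a standard symmetrization of the relativistic Euler equations, one rewrites $(\ref{v5_r1})$ in first-order quasilinear form $A^0(U)\,\partial_t U+\sum_{j=1}^{3}A^j(U)\,\partial_{x_j}U=0$ with $U=(\rho,v)$, the coefficients being smooth in $U$ wherever $c^2-|v|^2>0$ and the energy coefficient $\frac{\rho c^2+p}{c^2-|v|^2}$ is finite and bounded away from zero. On the open set $\mathcal O_\delta:=\{(\rho,v):\rho>\rho_*+\tfrac{\delta}{2},\ |v|^2<(1-\tfrac{\delta}{2})c^2\}$, which contains the range of the initial data by hypothesis, Lemma $\ref{v25_l3}$ (using Assumptions $\#1$ and $\#2$) gives $\rho\ge\rho_*>0$ and $\rho c^2+p>0$, while $p'(\rho)=A/\rho$ satisfies $0<p'(\rho)<A/\rho_*=c^2$ there; hence the system is genuinely hyperbolic with sound speed strictly below $c$, and since $p=A\ln\rho$ is $C^\infty$ on $\rho>0$, all coefficients $A^j(U)$ are $C^\infty$ on $\mathcal O_\delta$.

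\textbf{Symmetrizer and abstract existence.} Next I would exhibit (or quote) a symmetrizer $A^0(U)$ that is symmetric, $C^\infty$, and positive definite on $\mathcal O_\delta$ with each $A^0(U)A^j(U)$ symmetric; its positive definiteness is uniform on compact subsets of $\mathcal O_\delta$ thanks to the uniform bounds $\rho\ge\rho_*+\tfrac{\delta}{2}$ and $c^2-|v|^2\ge\tfrac{\delta}{2}c^2$ just obtained. Since $N=3$ and $s\ge3>\tfrac{N}{2}+1$, and the datum $(\rho_0,v_0)\in H^s_{ul}(\mathbb R^3)$ with $\rho_0\ge\rho_*+\delta$, $|v_0|^2\le(1-\delta)c^2$ takes values in a compact subset of $\mathcal O_\delta$, the local existence and uniqueness theorem for quasilinear symmetric hyperbolic systems — in its $H^s_{ul}$ version — yields a unique $U\in L^\infty([0,T];H^s_{ul})\cap C([0,T];H^s_{loc})\cap C^1([0,T];H^{s-1}_{loc})$ on some $[0,T]$ with $T=T(\delta,\rho_0,v_0)>0$; shrinking $T$ if needed keeps $U$ inside $\mathcal O_\delta$ (so $\rho>0$ and $A\ln\rho$ stays meaningful and smooth), and reversing the reformulation — the passage $U\leftrightarrow(\rho,v)$ being a diffeomorphism on $\rho>0$, cf. $(\ref{v51_e3})$ — returns the claimed solution $(\rho,v)$ in the stated class.

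\textbf{Main obstacle.} The delicate point is producing a symmetrizer that is simultaneously symmetric, smooth, and \emph{uniformly} positive definite on the physical region, which hinges on keeping $\frac{\rho c^2+p}{c^2-|v|^2}$ bounded and bounded below there; this is exactly what Assumptions $\#1$ and $\#2$ provide through Lemma $\ref{v25_l3}$, the uniform lower bound $\rho\ge\rho_*>0$ being indispensable since otherwise $\rho c^2+p$ could vanish and the system degenerate. The remaining ingredients — propagation of $H^s_{ul}$ regularity, uniqueness, continuation of the flow inside $\mathcal O_\delta$, and the uniformly local bookkeeping — are a routine application of the established theory.
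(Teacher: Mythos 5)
Your overall strategy coincides with the paper's: rewrite $(\ref{v5_r1})$ as a quasilinear symmetric hyperbolic system and apply Kato's theorem in the $H^s_{ul}$ scale, using Assumptions \#1 and \#2 (via Lemma \ref{v25_l3}) to keep $\rho\geq\rho_*>0$ and $\rho c^2+p>0$. However, there is a genuine gap: the entire technical content of the theorem is the symmetrization itself, and you never produce it --- you only say you would ``exhibit (or quote)'' a symmetrizer $A^0(U)$ with each $A^0A^j$ symmetric and $A^0$ positive definite, and you label the rest ``routine''. For the relativistic system this is not a citation-level step: the known symmetrizations (Makino--Ukai, Pan--Smoller) go through a specific change of dependent variables adapted to the equation of state, and the paper must construct it for the logarithmic pressure, namely the variables $w_0,w_k$ of $(\ref{e32_v12})$ built from $\phi(\rho)=\int_{\rho_*}^{\rho}c^2/(\rho c^2+p)\,d\rho$, compute the matrices $A^0,\dots,A^3$ explicitly, and verify positive definiteness of $A^0$ by a Schur complement and eigenvalue computation whose positivity uses exactly $p'\leq c^2$ and $|v|<c$ (the factor $c^4-p'|v|^2$) together with the positivity of the coefficients $(\ref{e37_v11})$--$(\ref{e42_v11})$, which in turn requires $\rho c^2+p>0$, i.e.\ Assumption \#2. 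Since $p=A\ln\rho$ is negative for $\rho<1$, this is precisely where an unverified appeal to a ``standard'' symmetrizer could fail; your proposal identifies the issue but does not resolve it.

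A second, related gap concerns returning to the original unknowns. In your write-up this step is vacuous or misreferenced: you take $U=(\rho,v)$ yet then invoke a ``diffeomorphism'' and cite $(\ref{v51_e3})$, which is the one-dimensional map of the classical (non-relativistic) Proposition 1 and has nothing to do with the relativistic transformation. The paper devotes Step 4 to this point: it computes the Jacobian determinant of the map $(\ref{e45_v7})$, and, since a nonvanishing Jacobian only yields a local diffeomorphism, it proves global injectivity by recovering $|v|^2$ and $\Phi(\rho)$ from $w$ (formulas $(\ref{e52_v12})$ and $(\ref{e57_v12})$) and using the strict monotonicity of $\Phi$. Without this (or an equivalent global invertibility argument) the passage from the symmetrized solution back to $(\rho,v)$, and hence the stated existence and uniqueness for the original Cauchy problem, is not justified.
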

\begin{proof}
The proof is divided into $4$ steps.

\textbf{Step 1}: Outline of Proof.

We first present a scheme for the proof of theorem $\ref{t10_v7}$.

In step 2, we will construct a map
\begin{align}
    (\rho,v)=(\rho,v_1,v_2,v_3)^{\top}\mapsto w:=(w_0,w_1,w_2,w_3)^{\top}\label{e45_v7}
\end{align}
such that system $(\ref{v5_r1})$ can be transformed into the following form
\begin{align}
    A^0(w)w_t+\sum_{k=1}^3A^k(w)w_{x_k}=\mathbf{0},\label{e46_v7}
\end{align}
where $A^k$, $k=0,1,2,3$ are $4$ by $4$ coefficient matrices depending on $w$. Here, the symbol $\top$ denotes the transpose operation of matrices.

In step 3, we shall show that the coefficient matrices satisfy i) $A^k$, $k=0,1,2,3$ are all real symmetric and smooth in $w$ and ii) $A^0$ is positive definite. Then, system $(\ref{e46_v7})$ is a symmetric hyperbolic system and its local existence will be followed by Kato's Theorem \cite{v5_r3, v21_rr4}. 

Last, in step 4, we shall prove that the map $(\ref{e45_v7})$ is a diffeomorphism to complete the proof of theorem $\ref{t10_v7}$. 

\textbf{Step 2}: Transformation to Symmetric Hyperbolic System

Inspired by Pan and Smoller \cite{v1_r1}, whose construction could be traced back to Makino \cite{v5_r2, v20_r4}, we claim the map is given by
\begin{align}
    \left\{\begin{matrix}
    w_0:=-\displaystyle\frac{c^3Ke^{\phi(\rho)}}{(\rho c^2+p)\sqrt{c^2-|v|^2}}+c^2,\\
    w_k:=\displaystyle\frac{cKe^{\phi(\rho)}}{(\rho c^2+p)\sqrt{c^2-|v|^2}}v_k,\ k=1,2,3,
    \end{matrix}\right.\label{e32_v12}
\end{align}
where
\begin{align}
    \phi(\rho):=\int_{\rho_*}^{\rho}\frac{c^2}{\rho c^2+p(\rho)}d\rho
\end{align}
and
\begin{align}
    K:=c^2\rho_*+p(\rho_*).
\end{align}
Note that the definition of $\rho_*$ is given by $(\ref{e27_v7})$.

Write $A^k(w)=(A^k_{ij})$, $i,j=0,1,2,3$. After a tedious but direct computation, one can show that system $(\ref{v5_r1})$ will be transformed into $(\ref{e46_v7})$ with
\begin{align}
    A^0=\Psi(\rho)\begin{pmatrix}
    B_1&B_2v_1&B_2v_2&B_2v_3\\
    B_2v_1&B_3v_1v_1+B_4&B_3v_1v_2&B_3v_1v_3\\
    B_2v_2&B_3v_2v_1&B_3v_2v_2+B_4&B_3v_2v_3\\
    B_2v_3&B_3v_3v_1&B_3v_3v_2&B_3v_3v_3+B_4
    \end{pmatrix}
\end{align}
and
\begin{align}
    (A^k)_{ij}=\left\{\begin{matrix}
    \Psi(\rho)B_2&\ \text{if }i=j=0,\\
    \Psi(\rho)(B_3v_jv_k+B_5\delta_{jk})&\ \text{if }i=0\text{ and }j\neq0,\\
    \Psi(\rho)(B_3v_iv_k+B_5\delta_{ik})&\ \text{if }i\neq0\text{ and }j=0,\\
    \Psi(\rho)\left[B_3v_iv_jv_k+B_4(v_i\delta_{jk}+v_j\delta_{ik}+v_k\delta_{ij})\right]&\ \text{if }i\neq0\text{ and }j\neq0,
    \end{matrix}\right.
\end{align}
where
\begin{align}
    \Psi(\rho)&:=\frac{(\rho c^2+p)^2}{c^3K{p}'e^{\phi(\rho)}(c^2-|v|^2)^{3/2}},\label{e37_v11}\\
    B_1&:=c^4+3{p}'|v|^2,\\
    B_2&:=c^4+2c^2{p}'+{p}'|v|^2,\\
    B_3&:=c^2(c^2+3{p}'),\\
    B_4&:=c^2{p}'(c^2-|v|^2),\\
    B_5&:=\frac{c^2{p}'(c^2-|v|^2)}{\rho c^2+p}.\label{e42_v11}
\end{align}
Note that $(\ref{e37_v11})$ to $(\ref{e42_v11})$ are always positive by lemma $\ref{v25_l3}$.

\textbf{Step 3}: Verification of Conditions of Kato's Theorem

It is clear that $A^k$, $k=0,1,2,3$ are all smooth real symmetric matrices. We shall show that $A^0$ is positive definite as follows.

Write
\begin{align}
    A^0&=\begin{pmatrix}
    \mathcal{A}&\mathcal{B}\\
    \mathcal{B}^\top&\mathcal{C}
    \end{pmatrix},
\end{align}
where
\begin{align}
    \mathcal{A}&:=B_1>0,\\
    \mathcal{B}&:=B_2v^\top,\\
    \mathcal{C}&:=B_3vv^\top+B_4I_{3\times 3}.
\end{align}

By Schur Complement Method, it suffices to show
\begin{align}
    \mathcal{C}-\mathcal{B}^\top \mathcal{A}^{-1}\mathcal{B}=(B_3-B_1^{-1}B_2^2)vv^\top+B_4I_{3\times 3}\label{e55_v9}
\end{align}
is positive definite.

However, the three real eigenvalues of $(\ref{e55_v9})$ are given by 
\begin{align}
    \lambda_1&=\lambda_2=B_4>0,\\
    \lambda_3&=(B_3-B_1^{-1}B_2^2)|v|^2+B_4=\frac{{p}'(c^2-|v|^2)^2(c^4-{p}'|v|^2)}{c^4+3{p}'|v|^2}>0.
\end{align}
This implies that $A^0$ is positive definite

\textbf{Step 4}: Returning to the Original System.

In this step, we shall show that the map $(\ref{e45_v7})$ is a diffeomorphism by Inverse Function Theorem.

Let us denote the map $(\ref{e45_v7})$ by $\mathbf{w}$. The domain of $\mathbf{w}$ is
\begin{align}
    \Omega:=\{(\rho,v)\in\mathbb{R}^4\ |\ \rho_*< \rho\text{ and }|v|^2<c^2\},
\end{align}
where $\rho_*=A/c^2$.

We also denote
\begin{align}
    \Phi(\rho):=\frac{K e^{\phi(\rho)}}{\rho c^2+p(\rho)}.
\end{align}
Note that $\Phi(\rho)>0$ by lemma $\ref{v25_l3}$ and
\begin{align}
    {\Phi}'(\rho)=\frac{-K{p}'(\rho)e^{\phi(\rho)}}{(\rho c^2+p)^2}<0.\label{e52_v14}
\end{align}
By direct computation, the Jacobian of $\mathbf{w}$ is given by
\begin{align}
    \left(\frac{\partial \mathbf{w}}{\partial \rho },\frac{\partial \mathbf{w}}{\partial v_1 },\frac{\partial \mathbf{w}}{\partial v_2 },\frac{\partial \mathbf{w}}{\partial v_3 }\right)=\frac{c\Phi}{(c^2-|v|^2)^{3/2}}\begin{pmatrix}
    -c^2(c^2-|v|^2)({\Phi}'/ \Phi)&-c^2v^\top\\
    (c^2-|v|^2)({\Phi}'/ \Phi) v&vv^\top+(c^2-|v|^2)I_{3\times 3}.
    \end{pmatrix}
\end{align}
and its determinant is
\begin{align}
    \left(\frac{c\Phi}{(c^2-|v|^2)^{3/2}}\right)^4\times\left(-(c^2-|v|^2)^4c^2\frac{{\Phi}'}{\Phi}\right)&=-\frac{c^6\Phi^3(\rho)}{(c^2-|v|^2)^2}{\Phi}'(\rho)>0
\end{align}
for all $\rho$ and $v$.

Thus, the Jacobian determinant of $\mathbf{w}$ is always non-zero. By Inverse Function Theorem, $\mathbf{w}$ is a local diffeomorphism.

Next, we shall show that $\mathbf{w}$ is a global diffeomorphism from $\Omega$ onto its image $\mathbf{w}(\Omega)$ by showing the injectivity of $\mathbf{w}$.

Note that we have
\begin{align}
    |v|^2=\frac{c^4}{(c^2-w_0)^2}(w_1^2+w_2^2+w_3^2).\label{e52_v12}
\end{align}
The proof of $(\ref{e52_v12})$ is as follows.
\begin{align}
    w_1^2+w_2^2+w_3^2&=\left(\displaystyle\frac{cKe^{\phi(\rho)}}{(\rho c^2+p)\sqrt{c^2-|v|^2}}\right)^2\left(v_1^2+v_2^2+v_3^2\right)\\
    &=\left(\frac{c\Phi(\rho)}{\sqrt{c^2-|v|^2}}\right)^2|v|^2.\label{e54_v12}
\end{align}
On the other hand, we have
\begin{align}
    (c^2-w_0)^2&=\left(\displaystyle\frac{c^3Ke^{\phi(\rho)}}{(\rho c^2+p)\sqrt{c^2-|v|^2}}\right)^2\\
    &=c^4\left(\frac{c\Phi(\rho)}{\sqrt{c^2-|v|^2}}\right)^2.\label{e56_v12}
\end{align}
Thus, $(\ref{e52_v12})$ is followed by $(\ref{e54_v12})$ and $(\ref{e56_v12})$. The proof of $(\ref{e52_v12})$ is complete. 

Next, we show that $\Phi(\rho)$ can be expressed as
\begin{align}
    \Phi(\rho)&=\frac{1}{c^2}\left((c^2-w_0)^2-c^2(w_1^2+w_2^2+w_3^2)\right)^{1/2}.\label{e57_v12}
\end{align}
Note that, from $(\ref{e54_v12})$ and $(\ref{e56_v12})$, we have
\begin{align}
    (c^2-w_0)^2-c^2(w_1^2+w_2^2+w_3^2)&=c^4\left(\frac{c\Phi(\rho)}{\sqrt{c^2-|v|^2}}\right)^2-c^2\left(\frac{c\Phi(\rho)}{\sqrt{c^2-|v|^2}}\right)^2|v|^2\\
    &=c^4\Phi^2(\rho)\left(\frac{c^2}{c^2-|v|^2}-\frac{|v|^2}{c^2-|v|^2}\right)\\
    &=c^4\Phi^2(\rho).
\end{align}
As $\Phi(\rho)>0$, the result follows. The proof of $(\ref{e57_v12})$ is complete.

Recall from $(\ref{e52_v14})$ that we have
\begin{align}
    {\Phi}'(\rho)=\frac{-K{p}'(\rho)e^{\phi(\rho)}}{(\rho c^2+p)^2}<0.
\end{align}
It follows that $\Phi$ is strictly decreasing and hence injective.

Now, we claim that the injectivity of $\Phi(\rho)$ implies the injectivity of $\mathbf{w}$. The proof is as follows.

Suppose the images of $(\rho_1,v_{11},v_{12},v_{13})$ and $(\rho_2,v_{21},v_{22},v_{23})$ coincide under $\mathbf{w}$. That is, 
\begin{align}
\mathbf{w}(\rho_1,v_{11},v_{12},v_{13}):=(w_{10},w_{11},w_{12},w_{13})=(w_{20},w_{21},w_{22},w_{23})=:\mathbf{w}(\rho_2,v_{21},v_{22},v_{23})
\end{align}
We want to show
\begin{align}
    (\rho_1,v_{11},v_{12},v_{13})=(\rho_2,v_{21},v_{22},v_{23}).
\end{align}
From $(\ref{e57_v12})$, we have
\begin{align}
    \Phi(\rho_1)=\Phi_(\rho_2).
\end{align}
Hence, $\rho_1=\rho_2$ as $\Phi$ is injective.

On the other hand, from $(\ref{e52_v12})$, we have
\begin{align}
|v_{1*}|^2:=v_{11}^2+v_{12}^2+v_{13}^2=v_{21}^2+v_{22}^2+v_{23}^2=|v_{2*}|^2.    
\end{align}
Recall that from $(\ref{e32_v12})_2$, we have
\begin{align}
    w_{jk}&=\displaystyle\frac{cKe^{\phi(\rho)}}{(\rho c^2+p)\sqrt{c^2-|v_{j*}|^2}}v_{jk}\\
    &=\frac{c\Phi(\rho_j)}{\sqrt{c^2-|v_{j*}|^2}}v_{jk},\ j=1,2;\ k=1,2,3.
\end{align}
As
\begin{align}
    \frac{c\Phi(\rho_1)}{\sqrt{c^2-|v_{1*}|^2}}=\frac{c\Phi(\rho_2)}{\sqrt{c^2-|v_{2*}|^2}},
\end{align}
we have $w_{1k}=w_{2k}$ implies $v_{1k}=v_{2k}$, $k=1,2,3$. The proof of the injectivity of $\mathbf{w}$ is complete.

The proof of theorem $\ref{t10_v7}$ is complete by replacing $\rho_*$ by $\rho_*-\delta>0$ for any small $\delta>0$.

\end{proof}
\section{Acknowledgement}
This research is partially supported by the 2021-22 SRG of the MIT Department, and the Internal Research Grant RG8/2021-2022R, the Education University of Hong Kong.

\end{document}